\date{}
\theoremstyle{plain}
\newtheorem*{theorem*}{Theorem}
\newtheorem{theor}{Theorem}[section]
\newtheorem*{theor*}{Theorem}
\newtheorem*{conj*}{Conjecture}
\newtheorem{thm}[theor]{Theorem}
\newtheorem{prop}[theor]{Proposition}
\newtheorem{definition}[theor]{Definition}
\newtheorem{lemma}[theor]{Lemma}
\theoremstyle{remark}
\newcommand{\B}{{\mathcal{B}}}
\def\Bo{{\mathcal B}}
\def\Be{\Bo_{d}(\eps)}
\def\Bep{\wt \Bo_{d}(\eps)}
\def\Rc{{\mathcal R}}
\def\Nc{{\mathcal N}}
\def\N{{\mathbb N}}
\def\r{\right}
\def\wt{\widetilde}
\def\disp{{\rm disp}}
\def\d{{\rm disp}}
\newcommand{\ga}{\gamma}
\newcommand{\de}{\delta}
\newcommand{\eps}{\varepsilon}
\def\qand{\quad \mbox{ and } \quad}
\title{Minimal dispersion on the cube and the torus}
\author{
A. Arman
\and
A. E. Litvak 
}
\newcommand\address{\noindent\leavevmode

\medskip

\noindent
Andrii Arman\\
Department of Mathematics,\\
University of Manitoba, \\
Winnipeg, MB, R3T 2N2, Canada\\
\texttt{\small
e-mail:   andrew0arman@gmail.com}\\

\medskip

\noindent
Alexander E. Litvak\\
Dept.~of Math.~and Stat.~Sciences,\\
University of Alberta, \\
Edmonton, AB, Canada, T6G 2G1.\\
\texttt{\small
e-mail:  aelitvak@gmail.com}\\
}
\begin{document}

\maketitle

\begin{abstract}
 We improve some upper bounds for minimal dispersion on the cube and torus.
 Our new ingredient is an improvement of a probabilistic lemma used to obtain upper bounds
   for dispersion in several previous works. Our new lemma combines a random and non-random
 choice of points in the cube. This leads to better upper bounds for the minimal
 dispersion.
 %The bounds are sharp up to double  logarithmic factor.
\end{abstract}

\bigskip

{\small
\noindent{\bf AMS 2010 Classification:}
primary: 52B55, 52A23;
secondary: 68Q25, 65Y20.\\
\noindent
{\bf Keywords:} complexity, dispersion, largest empty box, torus

%\tableofcontents

\section{Introduction}
\label{intro}

 For a given set of points $P\subset Q_d:=[0,1]^d$ its dispersion is defined as the
 supremum over volumes of axis-parallel boxes in $Q_d$ that do not intersect $P$.
 %, where by an axis-parallel box we mean a polytope with facets parallel to coordinate hyperplanes.
 Then the minimal dispersion on the cube is defined as the infimum of the dispersions of all possible subsets
 $P\subset Q_d$ of cardinality $n$. The dispersion on the torus is defined similarly.
This notion goes back to \cite{RT}, where  a notion from \cite{Hl} was modified.
Often it is more convenient to work with its inverse function, which, given a positive $\eps$,
measures the smallest positive integer $N=N(\eps, d)$ such that there exists a configuration $X$ of $N$ points in $[0,1]^d$ with the property
that any axis-parallel box of volume exceeding $\eps$ contains at least one point.
We refer to \cite{LL} and references therein for the history of the question and related references.
%
%
%This notion has many applications in different areas and attracted a significant attention of researchers
%in recent years.  We refer to \cite{AHR, DJ, Rud,  MU} and references therein for the
%history of estimating the minimal dispersion and relations to other branches of mathematics, to
%\cite{BC, HKKR, AEL, KMK, Sos, UV, UV1} for recent developments and best known bounds and to
% \cite{Kr, VT, MU-lat} for the dispersion of certain sets.

 In this note we improve some upper bounds for the minimal dispersions on the cube and on the torus and
 for its inverse function. 
 
 Several recent proofs on upper bounds on $N$ use the following scheme. In the first step, approximate the set of axis-parallel boxes in $Q_d$ by a finite set $\mathcal{N}$, with the property that every axis-parallel box of volume at least $\varepsilon$ contains at least one element of $\mathcal{N}$. In the second step, construct a set $P$ such that each element of $\mathcal{N}$ intersects $P$, in which case we say that $P$ is a piercing set for $\mathcal{N}$. To ensure that $P$ pierces $\mathcal{N}$, the points in $P$ are taken randomly, according to the uniform distribution on the cube, and subsequently a union bound is applied to verify that such random $P$ with high probability intersects every element of $\mathcal{N}$.

 Our main new ingredient is in the improvement of the second step, where a random choice of points is followed up by a deterministic phase. More precisely, our new probabilistic Lemma~\ref{lemma:prob}
   will have two phases. In the first phase we choose a (smaller) set $Q$ of random points and estimate how
   many sets in our approximation $\mathcal{N}$ have an empty intersection with $Q$ (in previous proofs one insisted that every set in $\mathcal{N}$ intersects $Q$). In a second phase we take care of  the ``empty'' sets in $\mathcal{N}$ (the ones that do not intersect $Q$) by choosing a representative point
  in each such set. The set $Q_0$ of representative points together with $Q$ forms a piercing set for $\mathcal{N}$.

  The new Lemma~\ref{lemma:prob} subsequently leads to better upper bounds on minimal dispersion, see~(\ref{eq:dispbounds}). However in doing the second step we lose randomness,
  so our new bound does not hold for a random choice of points. Moreover, it is known that
  our bound cannot hold in the random setting, for instance see~\cite{HKKR},
  where the authors provide lower bounds on the inverse of minimal dispersion for a random set of points
  (formula (\ref{lrb}) below). 
  
  We would also like to mention that the idea of choosing a random set of points 
  in a first phase and then completing it by a deterministic choice (depending on a realization of a random set) in the second phase 
  is not new and was used in literature, see e.g. \cite{Rogers}, or \cite[Chapter 3]{AS}.

% and $k$-dispersion (the notion introduced in \cite{HPUV}, which slightly modifies the standard
% definition  by allowing to have at most $k$ points in the intersection of  a given set $P$ and
% an  axis-parallel box). An important feature of our results is that we consider the
% dispersion and its inverse as functions of two variables without fixing one of the parameters.
%The improvement of previous results is achieved by a new
%construction of an approximating  family of axis-parallel boxes  (periodic or non-periodic)
%needed to be checked for a random choice of points.

\section{Notation, preliminaries and main result}
\label{notat}

Throughout the paper the following notation is used. For positive integer $d$ define the unit cube $Q_d=[0,1]^d$. We will use $|\cdot|$ to denote both volume of a subset of $\mathbb{R}^d$ and a cardinality of a finite set. Letters $C, C_0, C_1$, ..., $c$, $c_0$, $c_1$, ...  always denote absolute positive constant
(independent of all other parameters and sets), whose values may change from line to line.

Define a set of all axis-parallel rectangles in $Q_d$ by
\begin{equation*}\label{eq:Rd}
	\Rc_d=\left\{\prod_{i=1}^{d} I_i \,\, | \,\, I_i=[a_i,b_i) \subset [0,1]\r\}.
\end{equation*}
For a set $P\subset Q_d$ define its dispersion by
$$\disp(P)=\sup\left\{|B| \,\, | \,\, B \in \Rc_d, \; B\cap P=\emptyset\r\}.$$
The minimal dispersion $\disp^*$ is the following function of positive integers $n,d$,
$$\disp^*(n,d)=\inf\{\disp(P) \,\,|\,\, P \subset Q_d, \; |P|=n\}.$$
Finally, we define an inverse of minimal dispersion as
$$N(\eps, d)=\min\{n \in \mathbb{N} \,\,|\,\, \disp^*(n,d)\leq \eps\}.$$
Most of the results will be stated in terms of the inverse of minimal dispersion.

We also will be dealing with the dispersion on the torus, which is defined
similarly.  Let
\begin{equation*}\label{setperrd}
  \wt \Rc _d :=\left\{ \prod _{i=1}^d I_i(a, b) \,\,\, |\,\,\,  a,  b\in  Q_d \r\},
\end{equation*}
 where
$$
   I_i(a, b) :=
   \begin{cases}(a_i, b_i), &\mbox{ whenever }\, 0\leq a_i< b_i\leq 1 ,\\
    [0, 1]\setminus  [b_i, a_i], &\mbox{ whenever }\, 0\leq b_i< a_i\leq 1. \end{cases}
$$
Then we follow definitions above with boxes taken from $\wt \Rc _d$ instead of $\Rc _d$,
that is
$$
   \wt \d (P) = \sup \{|B| \, \, \, | \, \, \, B\in \wt \Rc _d, \, B\cap P=\emptyset\},
 \quad \quad  \quad
   \wt \d^* (n, d) = \inf_{|P|=n} \wt \d(P)  ,
$$
and
$$
 \wt N(\eps, d) = \min\{ n\in \N\,  | \,\,  \wt \d^* (n, d)\leq \eps\}  .
$$

%\subsection{Main result}

The behavior of dispersion has been intensively studied during the last decade, and it turns out that it behaves differently in different regimes.
When $\eps$ is extremely small with respect to $d$, namely $\eps < d^{-d^2}$, the best
known bound was obtained in \cite{BC}, where the authors proved that
\begin{equation*}\label{bcup}
    N(\eps, d) \leq  \frac{C\, d^2 \ln d}{\eps} .
\end{equation*}
This improved the bound $C_d/\eps$ with $C_d$ being exponential in $d$, obtained
in previous works \cite{RT, DJ, AHR}. Furthermore, the authors of \cite{BC}
have also shown that
for $\eps \leq (4d)^{-d}$ one has
\begin{equation*}\label{b-c-b}
    N(\eps, d)\geq \frac{ d}{e \eps }.
\end{equation*}

On the other hand, for relatively large $\eps$, namely for $\eps> \frac{1}{d}\, \frac{\ln ^2 d}{ \ln \ln d}$,
the best upper bound was obtained in \cite{AEL}, improving previous works \cite{Sos} and \cite{UV},
\begin{equation*}\label{large-e}
    N(\eps, d)\leq \frac{C(\ln d)  \ln(1/\eps)}{ \eps^2 }.
\end{equation*}
Very recently it was shown in \cite{TVV} that this bound is sharp (up to a logarithmic factor)
 whenever $1/4\geq \eps \geq 1/4\sqrt{d}$,
more precisely in this regime one has
\begin{equation*}\label{large-e2}
    N(\eps, d)\geq \frac{C \ln d }{ \eps^2 \ln(1/\eps)}.
\end{equation*}
We would like to mention here the bound from \cite{AHR}, which holds for all $\eps<1/4$, and which was
the first (lower) bound showing that the dispersion grows with the dimension,
\begin{equation*}\label{ahbd}
    N(\eps, d)\geq \frac{ \log_2 d}{8\eps }.
\end{equation*}

We would also like to note that for large $\eps$, namely $\eps \in (1/4, 1/2)$, the upper
bound does not grow with dimension. From  \cite{KMK} (see also \cite{Sos}) we have
$$
  N(\eps, d) \leq \frac{\pi}{\sqrt{\eps -1/4}} -3.
$$
Clearly, for $\eps \geq 1/2$ one has $N(\eps, d)=1$ by taking  the point $(1/2, ..., 1/2)$.

However, for $\eps$ not so large and not so small with respect to $d$ the picture is different.
The best known bound was obtained in \cite{LL}, improving previous results from
\cite{BEHW, Rud, AEL}, namely
\begin{equation}\label{l-l}
    N(\eps, d) \leq C\,  \frac{ d\ln \ln (1/\eps) + \ln (1/\eps)}{\eps} .
\end{equation}
The proof in \cite{LL} shows that a random choice of independent points uniformly distributed
in the cube works. Moreover, it was shown in \cite{HKKR} that  using such a random choice of points,
 one cannot expect anything better than
\begin{equation}\label{lrb}
  \max \left\{ \frac{c}{\eps} \ln\left(\frac{1}{\eps}\r),\, \frac{d}{2 \eps}\r\},
\end{equation}
that is, the bound (\ref{l-l}) is sharp  up to double logarithmic factor for a random choice of points.
In this note we improve (\ref{l-l}) by eliminating the $\ln (1/\eps)$ summand. Of course, as
the previous formula shows, the improvement cannot be achieved for a random choice of points.

Before formulating our main result we briefly discuss the dispersion on the torus.
 In \cite{MU} the lower bound was obtained valid for all $\eps\in (0, 1)$,
$$
    \wt N(\eps, d)\geq \frac{d}{\eps}.
$$
 It is interesting to note that contrary to the non-periodic
case,  the lower bound is at least linear in $d$ and always grows with $d$, even for
$\eps >1/2$.
 The best known upper bound was obtained in \cite{LL}, improving previous
 bounds from \cite{AEL} and \cite{Rud}, namely
\begin{equation}\label{eq:disp_torus}
 % \quad\quad\quad\quad
  \wt  N(\eps, d)\leq C\,  \frac{ d \ln (2d)+ \ln (e/\eps)}{\eps }.
\end{equation}
Here, the random  choice of points was also used. In this note we improve (\ref{eq:disp_torus}) to
$C d (\ln \ln (e/\eps) + \ln (2d))/\eps $ (which is always better), however
the choice of points is, once again, not random.

\smallskip
The main result of this paper is the following.

\begin{thm}\label{main}
Let $d\geq 2$ and $\eps \in (0, 1)$. Then
$$
     N(\eps, d)\leq  \frac{16 e d \ln \ln (8/\eps)}{\eps }
   \quad   \quad \mbox{ and } \quad \quad
  \wt  N(\eps, d)\leq  \frac{8 e d (\ln \ln (e/\eps) + \ln (2d))}{\eps }.
$$
 In particular, there exists an absolute constant
$C>1$ such that  for $d\geq 2$ and $n\geq 4d$ one has
$$
 \d^*(n, d)\leq  C\, \frac{d\ln \ln (n/d) }{n }
\quad   \quad \mbox{ and } \quad \quad
  \wt   \d^*(n, d)\leq  C\, \frac{d(\ln \ln (n/d) + \ln d)}{n }.
$$
\end{thm}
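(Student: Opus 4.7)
The plan is to combine Lemma~\ref{lemma:prob} with an explicit finite net of axis-parallel boxes, treating the cube and the torus separately. For the cube, I build a family $\Nc \subset \Rc_d$ of ``dyadic atoms'': for every $\mathbf{k} = (k_1,\ldots,k_d) \in \Z_{\ge 0}^d$ with $k_1+\cdots+k_d \le K := \lceil \log_2(8/\eps)\rceil$, include all products $\prod_{i=1}^d [j_i 2^{-k_i}, (j_i+1)2^{-k_i})$ with $0 \le j_i < 2^{k_i}$. Each atom has volume $2^{-\sum k_i} \ge \eps/8$, and a standard dyadic decomposition shows that every $B \in \Rc_d$ with $|B| \ge \eps$ contains at least one such atom. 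Grouping by the value of $\sum_i k_i$ gives
\[
|\Nc| \le \sum_{k=0}^{K} 2^k \binom{k+d-1}{d-1} \le 2^K \binom{K+d}{d} \le \frac{8}{\eps}\left(\frac{e(K+d)}{d}\right)^d.
\]

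Next, apply Lemma~\ref{lemma:prob}: sample $n_1$ points uniformly at random in $Q_d$, so the expected number of atoms missed by this sample is at most $|\Nc|(1-\eps/8)^{n_1} \le |\Nc|e^{-n_1\eps/8}$. By Markov's inequality there is a realization at which the missed count is at most its expectation, and adding one representative per missed atom produces a piercing set for $\Nc$ of cardinality at most $n_1 + |\Nc|e^{-n_1\eps/8}$. Balancing the two terms at roughly $L \approx 4/\eps$ missed atoms corresponds to $n_1 \approx (8/\eps)\ln(\eps|\Nc|/4)$. Substituting the bound on $|\Nc|$ and splitting into the cases $\log_2(1/\eps) \le d$ and $\log_2(1/\eps) > d$ (the former gives $\ln((K+d)/d) = O(1)$ while the latter gives $\ln((K+d)/d) \le \ln\ln(8/\eps) + O(1)$) yields the stated bound $N(\eps, d) \le 16ed\ln\ln(8/\eps)/\eps$.

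For the torus, the only change is the net: every interval appearing in a box from $\wt\Rc_d$ is either an ordinary sub-interval of $[0,1]$ or the wrap-around complement of one; using shifted dyadic grids on $\R/\Z$ (with $O(d)$ shifts per coordinate) I obtain an analogous atom family $\wt\Nc$ of volume-$\ge\eps/8$ sets satisfying $\ln|\wt\Nc| \le \ln|\Nc| + d\ln(2d)$, producing the extra $d\ln(2d)/\eps$ summand after the same two-phase argument is applied. Finally, the corollaries on $\d^*(n,d)$ and $\wt\d^*(n,d)$ follow by inversion: for $n \ge 4d$ set $\eps := Cd\ln\ln(n/d)/n$ (respectively $\eps := Cd(\ln\ln(n/d)+\ln d)/n$) with $C$ large enough, and verify that $N(\eps, d) \le n$ (respectively $\wt N(\eps, d) \le n$). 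The main obstacle is nailing down the explicit constants $16e$ and $8e$, which demands a careful balance among the atom size $\eps/8$, the threshold $L$, and the random-sample size $n_1$, plus sharp use of $(1-x)^n \le e^{-nx}$ and $\binom{K+d}{d} \le (e(K+d)/d)^d$.
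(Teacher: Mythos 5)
Your two-phase piercing argument (sample, apply Markov to the number of missed sets, then add one representative per missed set) is exactly the content of Lemma~\ref{lemma:prob}, and your inversion step to get the $\d^*$ bounds is fine. The gap is in the net. You claim that every $B\in\Rc_d$ with $|B|\geq\eps$ contains a dyadic atom $\prod_i[j_i2^{-k_i},(j_i+1)2^{-k_i})$ with $\sum_i k_i\leq\lceil\log_2(8/\eps)\rceil$, i.e.\ of volume at least $\eps/8$. This is false for large $d$: the largest dyadic subinterval of an interval of length $l_i$ can have length as small as (just over) $l_i/4$ — e.g.\ when $[a_i,b_i)$ straddles a coarse dyadic boundary — and these losses multiply across coordinates. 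A box with $|B|=\eps$ and all coordinates in worst position contains no dyadic box of volume larger than about $\eps\cdot 4^{-d}$ (already $\eps\cdot 2^{-d}$ is easy to force), hence no atom from your family once $d\geq 3$. Repairing this by enlarging $K$ to $2d+\log_2(8/\eps)$ forces $\delta=\eps\cdot 4^{-d}/8$, and Lemma~\ref{lemma:prob} then yields a bound with an extra factor exponential in $d$, far from $16ed\ln\ln(8/\eps)/\eps$. This compounding of per-coordinate rounding losses is precisely the obstruction that the $\delta$-approximation literature is designed to circumvent.

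The paper sidesteps this by importing Proposition~\ref{net-gen-box} (Propositions~3.2 and~3.3 of \cite{LL}): a non-dyadic net built from a geometric-type grid of endpoints, which is a $\delta$-approximation with $\delta=\eps^{1+\ga}/4$ — only a \emph{constant-factor} volume loss after the choice $\ga=1/\ln(1/\eps)$, giving $\delta=\eps/(4e)$ — at the price of cardinality about $7d\ln d\,(1+1/\ga)^d(\ln(e/\eps^{1+\ga}))^d/\eps^{1+\ga}$, whose logarithm is $2d\ln\ln(e^2/\eps)+O(\ln d+\ln(1/\eps^{1+\ga}))$; feeding this into Lemma~\ref{lemma:prob} gives the stated constants directly. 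If you want a self-contained proof you would need to reproduce such a construction (or Sosnovec/Litvak-type nets); the plain dyadic decomposition cannot deliver a constant-factor $\delta$-approximation in high dimension. The torus part of your sketch inherits the same problem.
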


\bigskip

 Thus, combining bounds of Theorem~\ref{main} with previously known bounds, the current
 state of the art for the inverse minimal dispersion on the cube is
 \begin{equation}\label{eq:dispbounds}
   N(\eps, d)\leq
 \begin{cases}
     \frac{C\, \ln d }{\eps^2}\, \ln \left(\frac{1}{\eps}\r) &\mbox{ if }\, \eps \geq  \frac{\ln^2 d}{d\ln \ln (2d)},\\[1.5ex]
     \frac{C \, d }{\eps}\, \ln \ln \left(\frac{1}{\eps}\r) &\mbox{ if }\, \frac{\ln^2 d}{d\ln \ln (2 d)}\geq \eps \geq \exp{(-d^{d})} ,\\[1.5ex]
     %\noalign{\vskip9pt}
     \frac{C\, d^2 \ln d }{\eps} &\mbox{ if }\, \eps \leq  \exp{(-d^{d})},
 \end{cases}
\end{equation}
which can be summarized in Figure~\ref{fig:1}.

\bigskip
\begin{figure}[h]
$\quad \quad\quad \quad \includegraphics[width=0.8\textwidth]{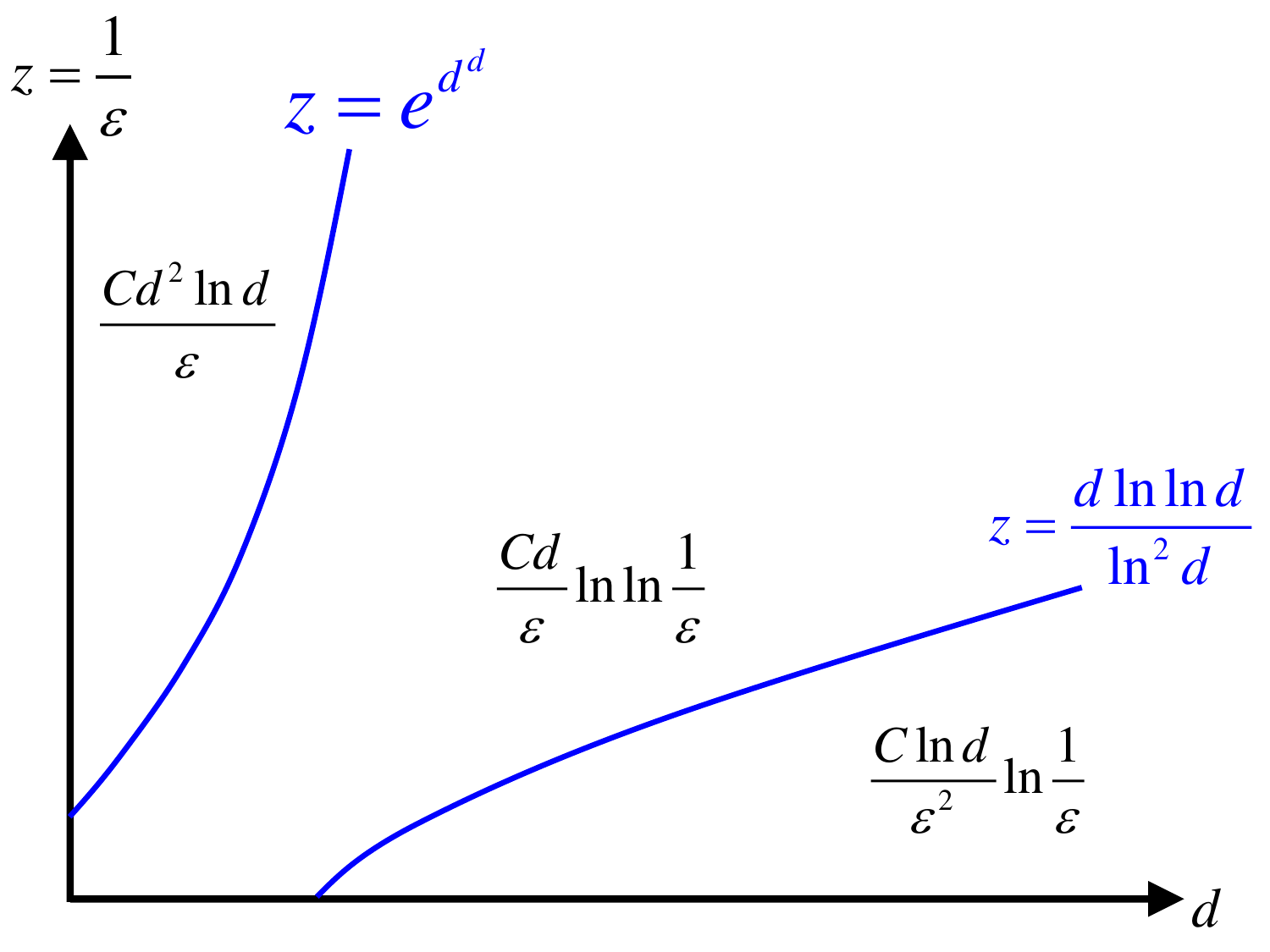}$
\caption{Current best known upper bounds on $N(\eps,d)$.}\label{fig:1}
\end{figure}

\section{A new probabilistic lemma}

For $\eps>0$ consider sets of all boxes from $\Rc_d$ (resp., $\wt \Rc _d$) of volume at least $\eps$, i.e.,
$$
   \Be :=\Big\{ B\in \Rc _d   \,\,\, |\,\,\,  |B|\geq \eps \Big\}
   \qand  \Bep :=\Big\{ B\in \wt \Rc _d   \,\,\, |\,\,\,  |B|\geq \eps \Big\}.
$$

Notice that these collections are infinite, and we use a usual approach and approximate them by finite collections. Following \cite{LL}, we define a $\delta$-approximations, which is a slight modification of the notion introduced in \cite{AEL}. An essentially same notion was recently considered in a similar context by M.~Gnewuch \cite{MGn}.

\begin{definition}[$\delta$-approximation of $\B_d(\eps)$ and $\Bep$]\label{def:deltaapprox}
	For $0<\delta\leq \eps\leq 1$ we say that a collection $\Nc\subseteq \Rc_d$ is a $\delta$-approximation for $\B_d(\eps)$ iff for all $B \in \B_{d}(\eps)$ there is $B_0\in \Nc$ such that $B_0\subseteq B$ and $|B_0|\geq \delta$. 	We define a $\delta$-approximation for  $\Bep$ in a similar way.
\end{definition}

In several works on minimal dispersion a variant of the following probabilistic
 lemma  was a key ingredient (Theorem~1 in \cite{Rud}, Lemma~2.3 with Remark~2.4
 in \cite{AEL}, Lemma~2.2 in \cite{LL}).

\begin{lemma}\label{unb}
Let $d\geq 1$ and  $\eps, \delta  \in (0,1)$. Let $\Nc$ be  a $\delta$-approximation for $\Be$
and
let   $\wt \Nc$ be a $\delta$-approximation for $\Bep$. Assume both $|\Nc|\geq 3$ and $|\wt \Nc|\geq 3$.
 Then
$$
   N(\eps, d) \leq  \frac{3\ln |\Nc|}{\delta} \quad  \qand\quad   \wt N(\eps, d)\leq \frac{3\ln |\wt \Nc|}{\delta } .
$$
Moreover, the random choice of independent points (with respect to the uniform distribution on $Q_d$)
 gives the result with probability at least  $1-1/|\Nc|$.
\end{lemma}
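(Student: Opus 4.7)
The plan is a standard probabilistic (union bound) argument, made possible by the $\delta$-approximation property.

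First I would set $n := \lceil 3\ln|\Nc|/\delta\rceil$ and pick $X_1,\ldots,X_n$ independently and uniformly at random from $Q_d$. For any fixed $B_0\in\Nc$, since $|B_0|\geq \delta$, the probability that none of the $X_i$ lies in $B_0$ is
$$
  (1-|B_0|)^n \leq (1-\delta)^n \leq e^{-\delta n} \leq e^{-3\ln |\Nc|} = |\Nc|^{-3}.
$$
A union bound over the (finite) collection $\Nc$ then gives that the probability that some $B_0\in\Nc$ is not hit by any $X_i$ is at most $|\Nc|\cdot |\Nc|^{-3} = |\Nc|^{-2} \leq 1/|\Nc|$, using $|\Nc|\geq 3$.

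Next I would use the defining property of a $\delta$-approximation to pass from $\Nc$ back to all of $\Be$. On the event that every $B_0\in\Nc$ contains at least one $X_i$ (which has probability at least $1-1/|\Nc|$), take any $B\in\Be$; by Definition~\ref{def:deltaapprox} there exists $B_0\in\Nc$ with $B_0\subseteq B$, and hence the point of $\{X_i\}$ witnessing $B_0$ also lies in $B$. Thus the random configuration $P=\{X_1,\ldots,X_n\}$ pierces every axis-parallel box of volume $\geq \eps$, so $\disp(P)\leq \eps$ and in particular $N(\eps,d)\leq n\leq 3\ln|\Nc|/\delta$ (with the natural integer-ceiling interpretation). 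The ``moreover'' clause is exactly the probability bound established above.

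Finally, the torus statement is proved by repeating the same argument verbatim, with $\wt\Nc$, $\Bep$, and $\wt\Rc_d$ in place of $\Nc$, $\Be$, $\Rc_d$; the only properties used are the uniform measure of $Q_d$, the volume lower bound $\geq\delta$ for elements of the approximation, and the inclusion property from Definition~\ref{def:deltaapprox}, all of which carry over without change. There is no real obstacle here: the lemma is essentially a careful packaging of the classical coupon-collector / random covering calculation, and the only thing to verify is that the constant $3$ in front of $\ln|\Nc|/\delta$ is large enough to push the union bound below $1/|\Nc|$, which the computation above confirms.
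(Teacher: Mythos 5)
Your argument is correct and is essentially the paper's own proof: the paper establishes Lemma~\ref{unb} in the Remark following Lemma~\ref{lemma:prob}, by exactly this first-moment/union-bound computation over the $\delta$-approximation followed by the inclusion property of Definition~\ref{def:deltaapprox}. The only cosmetic discrepancy is that to get the literal bound $N(\eps,d)\leq 3\ln|\Nc|/\delta$ you should take $n=\lfloor 3\ln|\Nc|/\delta\rfloor$ rather than the ceiling (the ceiling may exceed the stated bound); the resulting extra factor of at most $e$ in the failure probability $e\,|\Nc|^{-2}$ is then absorbed precisely by the hypothesis $|\Nc|\geq 3$, which is where that assumption is actually needed, not in the step $|\Nc|^{-2}\leq |\Nc|^{-1}$.
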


Our next lemma improves the bounds of Lemma~\ref{unb}.  
We would like to emphasize that our proof uses
two phases: the first random phase similar to 
the initial  proof of Lemma~\ref{unb} followed up by the second non-random phase. 
Notice that unlike Lemma~\ref{unb}, the bounds in Lemma~\ref{lemma:prob} will not hold for a random choice of independent points in $Q^{d}$, in view of (\ref{lrb}).

\begin{lemma}\label{lemma:prob}
	Let $d\geq1$, $0<\delta\leq 1/3$, $\delta \leq \eps\leq 1$.
Let $\Nc$ be  a $\delta$-approximation for $\Be$ and let   $\wt \Nc$ be a $\delta$-approximation for $\Bep$.
 Provided $\delta|\Nc |\geq e$ and $\delta|\wt \Nc |\geq e$ we have
$$
   N(\eps, d) \leq  \frac{\ln (4\delta |\Nc|)}{ \delta } \quad  \qand\quad   \wt N(\eps, d)\leq \frac{\ln(4\delta  |\wt \Nc|)}{\delta } .
$$
\end{lemma}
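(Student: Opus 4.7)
The plan is to execute the two-phase construction outlined in the introduction. First, I pick an integer $m$ and sample a set $Q=\{x_1,\dots,x_m\}$ of $m$ i.i.d.\ points uniformly in $Q_d$. Since $\Nc$ is a $\delta$-approximation of $\Be$, every $B\in\Nc$ has $|B|\geq \delta$, so
$$
\Prob\bigl[Q\cap B=\emptyset\bigr]=(1-|B|)^m\leq (1-\delta)^m\leq e^{-\delta m}.
$$
Let $X$ count the $B\in\Nc$ missed by $Q$. By linearity of expectation, $\E X\leq |\Nc|\,e^{-\delta m}$, so a realization exists with $X\leq \lfloor |\Nc|\,e^{-\delta m}\rfloor$. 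Fix such a $Q$; in the deterministic second phase pick an arbitrary representative $p_B\in B$ for every $B$ missed by $Q$, and set $Q_0:=\{p_B\}$. By construction $Q\cup Q_0$ meets every $B\in\Nc$, and because $\Nc$ is a $\delta$-approximation, every $B\in\Be$ contains some $B_0\in\Nc$, so $Q\cup Q_0$ also pierces all of $\Be$. Therefore $\disp(Q\cup Q_0)\leq \eps$, giving
$$
N(\eps,d)\leq |Q|+|Q_0|\leq m+\lfloor |\Nc|\,e^{-\delta m}\rfloor.
$$

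Next I balance the two summands. The natural choice is $|\Nc|\,e^{-\delta m}\approx 1/\delta$, i.e.\ $\delta m\approx \ln(\delta|\Nc|)$, since each of the two terms then contributes roughly $1/\delta$ beyond the $\ln(\delta|\Nc|)/\delta$ main term. Concretely I set $m:=\lceil \ln(\delta|\Nc|)/\delta\rceil$, a positive integer by the hypothesis $\delta|\Nc|\geq e$. Then $e^{-\delta m}\leq 1/(\delta|\Nc|)$, hence $\lfloor|\Nc|e^{-\delta m}\rfloor\leq\lfloor 1/\delta\rfloor\leq 1/\delta$, and consequently
$$
N(\eps,d)\ \leq\ m+\lfloor 1/\delta\rfloor\ \leq\ \frac{\ln(\delta|\Nc|)}{\delta}+1+\frac{1}{\delta}\ =\ \frac{\ln(\delta|\Nc|)+1+\delta}{\delta}.
$$
The hypothesis $\delta\leq 1/3$ gives $1+\delta\leq 4/3<\ln 4$, so the right-hand side is at most $(\ln(\delta|\Nc|)+\ln 4)/\delta=\ln(4\delta|\Nc|)/\delta$, which is the claimed bound on $N(\eps,d)$.

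The torus estimate for $\wt N(\eps,d)$ follows by rerunning the argument verbatim with $\Nc,\Be$ replaced by $\wt\Nc,\Bep$; only the volume lower bound on the approximating boxes and the inclusion property $B_0\subseteq B$ are used, both of which are part of the definition of a $\delta$-approximation in the periodic setting. The principal (though small) obstacle is precisely the final bookkeeping step: the two rounding losses, one from $\lceil\cdot\rceil$ in the choice of $m$ and one from $\lfloor 1/\delta\rfloor$, together contribute the additive overhead $1+1/\delta$, and this must be absorbed into the logarithmic factor $\ln 4$ in the statement. This is exactly where the restriction $\delta\leq 1/3$ is spent, and it is also what forces the particular constant $4$ inside the logarithm in the conclusion.
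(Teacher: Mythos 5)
Your proposal is correct and follows essentially the same two-phase argument as the paper: sample $M=\lceil \ln(\delta|\Nc|)/\delta\rceil$ random points, bound the expected number of missed boxes by $|\Nc|(1-\delta)^M\leq 1/\delta$, fix a good realization, add one deterministic representative per missed box, and absorb the additive overhead $1+\delta\leq 4/3<\ln 4$ into the constant $4$ inside the logarithm. The paper performs the identical computation (writing the final bound as $\ln(e^{1+\delta}\delta|\Nc|)/\delta$ with $e^{1+\delta}\leq e^{4/3}<4$), so there is nothing further to reconcile.
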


\begin{proof}
 We prove the bound for $N(\eps, d)$, the proof for $\wt N(\eps, d)$ is similar.
Let $|\Nc|=N$. It is sufficient to show that there is
$P\subset Q_d$ with $|P|\leq  (\ln (4\delta |\Nc|)/\delta $ such that $\disp(P)\leq\eps$. To show that $\disp(P)\leq\eps$ it is sufficient to show that for all $A\in \Nc$, we have $A\cap P \neq \emptyset$.
	
	Let $M$ be a positive  integer which will be specified later.
Consider a collection $\mathcal{X}=\{x_1,\ldots, x_M\}$ of points in $Q_d$ chosen independently and uniformly at random from $Q_d$.
	
	For each $A\in \Nc$ consider the ``bad" event $B_A=\{A \cap  \mathcal{X}=\emptyset\}$.
By assumptions on the volume of sets in $\Nc$ and by independence of $x_i$'s,  for all $A\in \Nc$ we have  $\mathbb{P}(B_A)\leq (1-\delta)^{M}$.
	
	Next let $b=b(\mathcal{X})$ be the random variable that counts the number of bad events, i.e., $b$ counts the number of sets in $\mathcal{N}$ that do not intersect $\mathcal{X}$. Clearly,
 $$b=\sum_{A\in \Nc}\chi_{B_A},$$
  where $\chi_E$ denotes the indicator of the event $E$. Then
	$$\mathbb{E}\, b=\sum_{A\in \Nc} \mathbb{E}\chi_{B_A}\leq N(1-\delta)^{M}.$$
	Therefore there is an instance $Q=\{x_1,\ldots, x_M\}\subset Q_d$ for which $b(Q)\leq N(1-\delta)^{M}$. In other words,  the set $\Nc _0:=\{A\in \Nc \; | \; A \cap Q=\emptyset\}$
	satisfies $|\Nc _0|\leq N(1-\delta)^{M}.$
	
	Finally, let $Q_0$ be a minimal collection of points which has a non-empty intersection with every set in $\Nc _0$ (we always have $|Q_0|\leq |\Nc_0|$), and define $P=Q\cup Q_0$. Then, by construction, for all $A\in \Nc$, we  have $A\cap P \neq \emptyset$.

	Note that
	$$|P|\leq M+N(1-\delta)^{M}\leq M+Ne^{-\delta M},$$
and choose $$M=\left\lceil \frac{\ln (\delta N)}{\delta}\right\rceil.$$
	Then
$$|P|\leq  \frac{1}{\delta}+  \frac{\ln (\delta N)}{\delta}+1\leq
   \frac{\ln (e^{1+\delta} \delta N)}{\delta},$$
   which completes the proof.
\end{proof}

\medskip
\noindent
{\bf Remark. }  Note that in order to have a random choice of points, i.e., in order
to show that for all $A\in \Nc$ one  has $A\cap \mathcal{X} \neq \emptyset$, one
needs to work with the event $b=0$. It is easy to see that for $M=\lfloor (3\ln |\Nc|)/\delta\rfloor$
one has
$$\mathbb{P}(b=0)\geq 1-\frac{1}{|\Nc|}.$$
This proves Lemma~\ref{unb} which  was used in \cite{Rud, AEL, LL}.

\section{Completing the proof of Theorem~\ref{main}}

To complete the proof we use the following result from \cite{LL}
(Propositions~3.2 and 3.3).

\begin{prop}\label{net-gen-box}
Let $d\geq 2$ be an integer, $\eps \in (0,1)$, and $\ga>0$. Let $\delta = \eps^{1+\ga}/4$.
There exists a $\de$-approximations $\Nc$  for $\Be$ and $\wt \Nc$  for $\Bep$
of cardinalities
$$
   |\Nc|\leq   7 d\ln d \, \frac{ (1+1/\ga)^{d }(\ln (e/\eps^{1+\ga}))^d}{\eps^{1+\ga} }
   \quad \mbox{ and } \quad
    |\wt \Nc|\leq 7 d\ln d \, \frac{ (1+1/\ga)^{d }(2d)^d}{\eps^{1+\ga} }
    .
$$
%In particular, taking \ga=
\end{prop}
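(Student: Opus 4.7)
The plan is to construct $\Nc$ and $\wt\Nc$ explicitly by a two-level adaptive discretization of the parameters $(a_i,b_i)_{i=1}^d$ defining an axis-parallel (or periodic) box. The first level discretizes the side lengths $\ell_i:=b_i-a_i$ onto a geometric scale of ratio $1+\mu$ for an appropriately chosen $\mu=\mu(\ga)$; the second level discretizes the starting points $a_i$ on an adaptive grid whose spacing is proportional to the rounded side length $\tilde\ell_i$. Given $B\in\Be$, the construction produces a sub-box $B_0=\prod_i[\tilde a_i,\tilde a_i+\tilde\ell_i)\subseteq B$ with $|B_0|=\prod_i\tilde\ell_i\geq \eps^{1+\ga}/4=\de$, establishing that $\Nc$ is a $\de$-approximation.

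For the cube, first I would fix the side-length grid $L=\{(1+\mu)^{-k}:0\leq k\leq K\}$, where $K$ is the smallest integer with $(1+\mu)^{-K}\leq \eps^{1+\ga}$, so $K\leq \log_{1+\mu}(1/\eps^{1+\ga})$. Calibrate $\mu\asymp\ga$ so that $(1+\mu)^{-d}\geq \eps^{\ga}/4$; this ensures that rounding each $\ell_i$ down to $\tilde\ell_i\in L$ produces a total volume loss of at most the factor $\eps^{\ga}/4$. Then I would discretize each $a_i$ on a grid of spacing $\mu\tilde\ell_i$ in $[0,1-\tilde\ell_i]$, rounding up to the nearest grid point $\tilde a_i$, which guarantees $[\tilde a_i,\tilde a_i+\tilde\ell_i)\subseteq[a_i,b_i)$.

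The counting then splits along scale-tuples. The number of multi-indices $(k_1,\ldots,k_d)$ indexing the chosen side lengths is at most $(K+1)^d$. Using $\log_{1+\mu}(1/\eps^{1+\ga})=\ln(1/\eps^{1+\ga})/\ln(1+\mu)\leq(1+1/\mu)\ln(e/\eps^{1+\ga})$ together with $1/\mu\asymp 1/\ga$ yields $(K+1)^d\leq C^d(1+1/\ga)^d(\ln(e/\eps^{1+\ga}))^d$. For each fixed scale tuple, the number of starting-point combinations is at most $\prod_i(\mu\tilde\ell_i)^{-1}=\mu^{-d}(\prod_i\tilde\ell_i)^{-1}\leq 4\mu^{-d}/\eps^{1+\ga}$. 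Multiplying these two counts and absorbing the $\mu$-factors into the $(1+1/\ga)^d$ term produces the stated bound up to the prefactor $7d\ln d$, which accounts for boundary truncation and for lower-order terms in the side-length rounding.

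For the torus, the same two-level scheme applies, but the absence of the endpoint constraint $a_i+\tilde\ell_i\leq 1$ changes the trade-off between the two grids: each arc $I_i(a,b)$ is parametrized by its length and orientation on the circle, and the $\de$-approximation can be built with $O(d)$ distinct side-length scales per coordinate, leading to $(2d)^d$ in place of $(\ln(e/\eps^{1+\ga}))^d$. I expect the main obstacle in either case to be the joint calibration of $\mu$ with $\ga$ so that the cumulative $d$-fold multiplicative loss stays bounded below the factor $\eps^{\ga}/4$ while keeping the starting-point grid sparse; a secondary subtlety is that very short sides (with $\tilde\ell_i$ close to $\eps^{1+\ga}$) require a very fine grid of starting points, and the scale-wise counts must be summed carefully to preserve the clean product structure that appears in the final bound.
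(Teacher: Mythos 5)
You should first note that the paper does not actually prove Proposition~\ref{net-gen-box}: it is quoted from \cite{LL} (Propositions~3.2 and 3.3), so there is no in-paper argument to compare against. Your general plan --- a two-level discretization, geometric in the side lengths and adaptive in the starting points --- is the right shape and is in the spirit of the constructions in \cite{AEL, LL}. But the proposal has a genuine gap, and it is precisely the one you flag at the end as ``the main obstacle'' without resolving it: the two demands you place on $\mu$ are incompatible in the regime that matters. You need (a) $\mu\gtrsim\ga$, so that the number of scales $K+1\leq (1+1/\mu)\ln(e/\eps^{1+\ga})$ and the starting-point factor $\mu^{-d}$ can be absorbed into $(1+1/\ga)^d$; and (b) $(1+\mu)^{-d}\geq \eps^{\ga}/4$ (in fact $(1+\mu)^{-2d}$, since you must also shrink $\tilde\ell_i$ below $\ell_i/(1+\mu)$ to leave room for rounding $a_i$ up by $\mu\tilde\ell_i$), so that the $d$-fold compounded volume loss stays within budget. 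Condition (b) with $\mu\asymp\ga$ forces $d\lesssim \ln(1/\eps)+\ga^{-1}$. In the application in this paper one takes $\ga=1/\ln(1/\eps)$, so the loss budget $\eps^{\ga}/4$ is the absolute constant $1/(4e)$, and (b) then forces $\mu=O(1/d)$, whence $\mu^{-d}=d^{\Theta(d)}$ and the claimed cardinality bound is destroyed. The failure occurs exactly where the theorem is interesting, e.g.\ $\eps\sim \ln^2 d/d$, where $\ln(1/\eps)\ll d$. The underlying obstruction is that a box of volume $\geq\eps$ can have all $d$ side lengths equal to $1-\Theta(\ln(1/\eps)/d)$; shrinking each coordinate by a uniform multiplicative factor $1/(1+\mu)$ is then far too destructive.

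Any correct construction must treat the near-full coordinates differently: only about $\ln(1/\eps)/\th$ coordinates can have $\ell_i\leq 1-\th$, so the loss has to be budgeted scale by scale (coordinates with $\ell_i$ close to $1$ are perturbed only additively, by an amount like $\ell_i/(2d)$, so that the total loss over all $d$ coordinates is a constant factor), and this is also where the $d\ln d$ prefactor and the single --- rather than squared --- power of $(1+1/\ga)^d$ come from in \cite{LL}. A secondary issue: even granting your calibration, you would accumulate $\mu^{-d}$ twice (once from the scale tuples, once from the position grids), giving $(1+1/\ga)^{2d}$. Finally, the torus paragraph inherits the same unresolved calibration problem, and the assertion that ``$O(d)$ side-length scales per coordinate'' produces the $(2d)^d$ factor is not substantiated; in \cite{LL} that factor comes from the position grid on the circle, not from the number of scales. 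As written, the proposal is a plausible plan rather than a proof.
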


The proof of Theorem~\ref{main} is a straightforward application of Lemma~\ref{lemma:prob} and
Proposition~\ref{net-gen-box}.
We provide it here for the sake of completeness.

\begin{proof}[Proof of Theorem~\ref{main}.]
Let $\gamma =1/\ln (1/\eps)$. Then $\eps ^{1+\ga}= \eps/e$. Let $\delta = \eps^{1+\ga}/4=\eps/(4e)$.
Let $\Nc$ and $\wt \Nc$ be $\delta$-approximations constructed in Proposition~\ref{net-gen-box} with
cardinalities
$$
  |\Nc|  \leq
  7 d\ln d \, \frac{ (1+1/\ga)^{d }  \, (\ln (e/\eps^{1+\ga}))^d}{\eps^{1+\ga}}
  \leq  7e d\ln d \,  \frac{(\ln (e/\eps))^{d }  \, (\ln (e^2/\eps))^d}{ \eps}
$$
and
$$
  |\wt \Nc|  \leq
  7 d\ln d \, \frac{ (1+1/\ga)^{d }(2d)^d}{\eps^{1+\ga} }
  \leq  7e d\ln d \,  \frac{(\ln (e/\eps))^{d }  \, (2d)^d}{ \eps}.
$$
Then
$$
  \ln (4 \delta |\Nc|) \leq 2d \ln \ln (e^2/\eps)+ \ln (7  d\ln d)\leq 4d \ln \ln (8/\eps)
$$
and
\begin{align*}
  \ln (4 \delta |\wt \Nc|) &\leq d \ln \ln (e/\eps)+  d\ln (2d) + \ln (7  d\ln d)
  \leq 2d\left(\ln \ln (e/\eps) + \ln (2d)\right).
\end{align*}
Hence, Lemma~\ref{lemma:prob} now implies the result.
\end{proof}

\subsection*{Acknowledgments}
\thanks{The first named author was supported by the University of Manitoba Research Grant Program project ``Covering problems with applications in Euclidean Ramsey theory and convex geometry'' and PIMS PDF.
The second named author gratefully acknowledge the support of the Leibniz Center for Informatics,
where several discussions about this research were held during the Dagstuhl Seminar
``Algorithms and Complexity for Continuous Problems'' (Seminar ID 23351).}
Part of this work was conducted during the first named author's visit to University of Alberta.

%\nocite{*}

\address

\end{document}